\numberwithin{equation}{section}
\theoremstyle{definition}
\newtheorem{Thm}[equation]{Theorem}
\newtheorem{Prop}[equation]{Proposition}
\newtheorem{Cor}[equation]{Corollary}
\newtheorem{Lem}[equation]{Lemma}
\newtheorem{Exa}[equation]{Example}
\newtheorem{Rmk}[equation]{Remark}
\newtheorem{Que}{Question}
\def\imod#1{\allowbreak\mkern5mu{\operator@font mod}\,\,#1}
\begin{document}

\title[Eigenvalue of Fricke Involution]{Eigenvalue of Fricke Involution on Newforms of Level $4$ and of Trivial Character}
\author[Yichao Zhang]{Yichao Zhang}
\address{Department of Mathematics, University of Connecticut, Storrs, CT 06269}
\email{yichao.zhang@uconn.edu}
\date{}
\subjclass[2010]{Primary: 11F11, 11F06}
\keywords{Fricke Involution, non-Dirichlet character}

\begin{abstract}
In this note, we consider the newforms of integral weight, level $4$ and of trivial character, and prove that all of them are actually level $1$ forms of some non-Dirichlet character. As a byproduct, we can prove that all of them are eigenfunctions of the Fricke involution with eigenvalue $-1$.
\end{abstract}

\maketitle

\section*{Introduction}
\noindent
The Fricke involution $W_N$ of level $N$, also known as the canonical involution, acts on the space of newforms of level $N$, some integral weight $k$, and trivial character. Here $k$ is necessarily even and positive. It is well-known that Hecke eigenforms behave well under the Fricke involution. More specifically, if $f$ is a normalized Hecke eigenform of some level $N$, weight $k$ and of trivial character, then we have $f|_kW_N=cg$ with $c\in\mathbb C^\times$ and $g$ another normalized Hecke eigenform in the same space. The Fourier coefficients of $g$ can be explicitly determined by that of $f$ but the scalar $c$ is left mysterious. 
\begin{Que} can we explicitly determine $c$ with information of $f$?
\end{Que}
\noindent
If $N$ is square-free, one can express $c$ in terms of $a_p$ with $p\mid N$; for example, one may work it out explicitly using Corollary 4.6.18 in \cite{miyake2006modular} and a similar result as Lemma 2.1 in \cite{zhang2013vector}. For example, if $N=p$ is a prime and assume $f=\sum_na_nq^n$, then $g=f$ and $c=-p^{1-\frac{k}{2}}a_p\in\{\pm 1\}$. The determination of $c$ in the case of non-square-free $N$ is more subtle, due the vanishing of some Fourier coefficients $a_p$ when $p^2\mid N$. In this note, we consider this question in the simplest case, namely the case when $N=4$. 

On the other hand, the structure of $\text{SL}_2(\mathbb Z)$ has been known for a long time and it is generated by the matrices $S,T$ up to some defining relations. Explicitly, for the modular group $\text{PSL}_2(\mathbb Z)=\text{SL}_2(\mathbb Z)/\{\pm I\}$, we have
\[\text{PSL}_2(\mathbb Z)=\langle S,T: S^2=(ST)^3=I\rangle,\]
where $S,T$ denote their corresponding images in $\text{PSL}_2(\mathbb Z)$. We can use instead the pair of generators $S$ and $ST$, and it follows that $\text{PSL}_2(\mathbb Z)$, hence $\text{SL}_2(\mathbb Z)$, may possess some non-trivial characters $\chi$. For example, Conrad \cite{conradsl2} gave an explicit example of such a $\chi$ of (maximal) order $12$ in his online notes on $\text{SL}_2(\mathbb Z)$. If we restrict our attention to the modular group $\text{PSL}_2(\mathbb Z)$, such a $\chi$ has order at most $6$. Note that since these characters are level $1$ characters, they are not induced by Dirichlet characters on the level. A natural question then arises:
\begin{Que} can the space $\mathcal S(\text{SL}_2(\mathbb Z),k,\chi)$ be non-zero for some weight $k$ and some non-trivial character $\chi$?
\end{Que}
\noindent
In other words, are there non-zero cusp forms of level $1$, type $\chi$ and of some weight $k$? The answer is positive and we shall consider a real non-trivial character $\chi$.

It turns out that these two questions are related. They can be both answered by an isomorphism between the space of newforms of level $4$ and of trivial character and that of modular forms of level $1$ and type $\chi$. Here $\chi$ is the only non-trivial real character of $\text{PSL}_2(\mathbb Z)$ and it satisfies $\chi(S)=\chi(T)=-1$. Such an isomorphism is given in Theorem 1.2 below. In particular, Theorem 1.2 says that the level $4$ newforms are actually level $1$ forms, that is, they are ``oldforms" when non-Dirichlet characters are taken into account. This is analogous to the results in \cite{zhang2013vector}, where we proved that forms in a subspace of the space of modular forms of level $N$ and character $\left(\frac{N}{\cdot}\right)$ are actually level $1$ (vector valued) modular forms for some $N$; for example, $N\equiv 1\imod 4$ and $N>1$.

Such an isomorphism answers Question 2 directly and the first such example appears when $k=6$. It also answers Question 1, since under the isomorphism, the eigenvalue of the Fricke involution is given by $\chi(S)$ which is $-1$. Actually, we always have $\chi(S)=\chi(T)$ when $\chi$ is real. This is Theorem 1.3, with which we may make the functional equation of the L-function associated to $f$ more concrete.

In Section 1, we set up the notations and state the main results. In Section 2, we prove some results of modular forms between different levels (or rather, groups), and also consider possible real characters of $\text{PSL}_2(\mathbb Z)$. In the last section, we give the proof of Theorem 1.2 and Theorem 1.3 and end the note with an example.

\section{Statements of Main Theorems}

\noindent
In this section, we fix the notations and state the main results. For unexplained notations and terminology, we refer the readers to any standard textbook on modular forms, for example \cite{miyake2006modular}.

For a positive integer $N$, we have congruence subgroups $\Gamma_0(N)$ and $\Gamma(N)$ defined as follows:
\[\Gamma_0(N)=\left\{\begin{pmatrix}
a&b\\
c&d
\end{pmatrix}\in \text{SL}_2(\mathbb Z): \begin{pmatrix}
a&b\\
c&d
\end{pmatrix}\equiv \begin{pmatrix}
*&*\\
0&*
\end{pmatrix}\imod N\right\},\]
\[\Gamma(N)=\left\{\begin{pmatrix}
a&b\\
c&d
\end{pmatrix}\in \text{SL}_2(\mathbb Z): \begin{pmatrix}
a&b\\
c&d
\end{pmatrix}\equiv \begin{pmatrix}
1&0\\
0&1
\end{pmatrix}\imod N\right\},\]
where $*$ means no restriction on the corresponding entry. We shall be only interested in the case  when $N=2$ or $4$.

Let $k$ be an even positive integer and $\mathbb H$ the upper half plane. Recall that for a real matrix $M$ of positive determinant and a function $f$ on $\mathbb H$, the weight-$k$ slash operator of $M$ is defined by
\[(f|_kM)(\tau)=\text{det}(M)^{\frac{k}{2}}(c\tau+d)^{-k}f(M\tau),\quad M=
\begin{pmatrix}
a&b\\
c&d
\end{pmatrix},\]
where $M\tau=\frac{a\tau+b}{c\tau+d}$.
Let $\Gamma$ be any congruence subgroup of $\text{SL}_2(\mathbb Z)$ and $\chi$ be any character (homomorphisms into $\mathbb C^\times$ of finite order) of $\Gamma$. Denote by $\mathcal S(\Gamma,k,\chi)$ the space of cusp forms for $\Gamma$ of weight $k$ and of character $\chi$, namely holomorphic functions $f$ on $\mathbb H$ such that $f|_kM=\chi(M)f$ for all $M\in\Gamma$ and $f$ vanishes at cusps. If $\Gamma=\Gamma_0(N)$ and $\chi$ is a Dirichlet character modulo $N$, we denote $\mathcal S^{\text{new}}(\Gamma_0(N),k,\chi)$ the subspace of newforms for $\Gamma_0(N)$ of weight $k$ and of character $\chi$. 
Recall that the Fricke involution, defined by the weight $k$ slash operator of 
\[W_N=\begin{pmatrix}
0&-1\\
N&0
\end{pmatrix},\]
acts on the space $\mathcal S^{\text{new}}(\Gamma_0(N),k,1)$. We have also the Hecke operators $T_p$, for $p\nmid N$ a rational prime, and $U_p$ for $p\mid N$, and we write down the action of $U_p$ explicitly as follows
\[f|_kU_p=p^{\frac{k}{2}-1}\sum_{j\imod p}f\left|_k\begin{pmatrix}
1&j\\
0&p
\end{pmatrix}\right..\] Note that there are different normalizations in the literature on the Hecke operators.
For ease of notations, we denote
\[I=\begin{pmatrix}
1&0\\
0&1
\end{pmatrix},\quad S=\begin{pmatrix}
0&-1\\
1&0
\end{pmatrix},\quad T=\begin{pmatrix}
1&1\\
0&1
\end{pmatrix},\quad T_{1/2}=\begin{pmatrix}
1&\frac{1}{2}\\
0&1
\end{pmatrix},\quad V_N=\begin{pmatrix}
N&0\\
0&1
\end{pmatrix}.\]

Assume that $f$ has Fourier expansion $\sum_na_nq^n$ at $\infty$; here $q=e^{2\pi i\tau}$. Recall that a non-zero $f$ is called a Hecke eigenform or a primitive form if $f$ is a common eigenfunction for all the Hecke operators $T_p$. If so, $a_1\neq 0$ and $f$ is called normalized if $a_1=1$. We recall the following well-known result:

\begin{Lem}
Let $f$ be a normalized Hecke eigenform in $\mathcal S^{\text{new}}(\Gamma_0(N),k,1)$ and let $f=\sum_na_nq^n$ be its Fourier expansion at $\infty$. Then

(1) $f|_kT_p=a_pf$ for each $p\nmid N$.

(2) $f$ is an eigenfunction for $U_p$ and $f|_kU_p=a_pf$ for each $p\mid N$.

(3) $f|_kW_N=cg$, where $c\in\mathbb C^\times$, $g=\sum_nb_nq^n$ is a normalized Hecke eigenform such that $b_p=a_p$ if $p\nmid N$ and $b_p=\overline{a_p}$ if $p\mid N$.
\end{Lem}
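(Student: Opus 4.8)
The plan is to treat (1) and (2) by a direct Fourier-coefficient computation, and (3) by combining the intertwining of $W_N$ with the Hecke operators with Atkin--Lehner strong multiplicity one.

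For (1), I would first record the effect of $T_p$ on Fourier expansions: if $f=\sum_na_nq^n$ and $p\nmid N$, then $f|_kT_p=\sum_n(a_{pn}+p^{k-1}a_{n/p})q^n$, with the convention $a_{n/p}=0$ when $p\nmid n$. Since $f$ is by hypothesis a common eigenfunction of all $T_p$ with $p\nmid N$, write $f|_kT_p=\lambda_pf$. Comparing the coefficient of $q^1$ on both sides gives $\lambda_p\cdot a_1=a_p+p^{k-1}a_{1/p}=a_p$, and the normalization $a_1=1$ yields $\lambda_p=a_p$.

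For (2), the one genuine input is that a newform is automatically an eigenfunction of $U_p$ for $p\mid N$; this is part of the Atkin--Lehner theory (strong multiplicity one together with the facts that $U_p$ preserves $\mathcal S^{\text{new}}(\Gamma_0(N),k,1)$ and commutes with $T_q$ for $q\neq p$), and I would cite it from \cite{miyake2006modular}. Granting this, I would compute directly from the definition in the text that $f|_kU_p=\sum_ma_{pm}q^m$: unfolding the slash operators gives $f|_kU_p=p^{-1}\sum_{j\imod p}f\big(\tfrac{\tau+j}{p}\big)$, and summing over $j$ annihilates every Fourier mode except those with $p\mid n$, leaving $\sum_ma_{pm}q^m$. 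Reading off the coefficient of $q^1$ and arguing as in (1) gives $f|_kU_p=a_pf$.

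For (3), the key structural facts are that $W_N$ normalizes $\Gamma_0(N)$, that $W_N^2=-NI$ acts trivially under $|_k$ (here $k$ even is used), so that $|_kW_N$ is an involution, and that it preserves the newform space $\mathcal S^{\text{new}}(\Gamma_0(N),k,1)$ (being, up to a unitary factor, a Petersson isometry carrying the old space to itself). I would then use that $W_N$ commutes with $T_p$ for every $p\nmid N$, so that $f|_kW_N$ is again a $T_p$-eigenform with the same eigenvalue $a_p$ for all $p\nmid N$; thus $b_p=a_p$ there. By strong multiplicity one there is a unique normalized newform $g=\sum_nb_nq^n$ in the space with these eigenvalues, whence $f|_kW_N=cg$, and $c\neq 0$ because $|_kW_N$ is invertible and $f\neq 0$. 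The remaining coefficient relation at the ramified primes, $b_p=\overline{a_p}$ for $p\mid N$, follows from the explicit Atkin--Lehner formula identifying $g$ with $f^\rho=\sum_n\overline{a_n}q^n$, which I would quote from Corollary 4.6.18 in \cite{miyake2006modular}.

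The main obstacle is part (3), and within it the two inputs that are not mere computations: that $|_kW_N$ maps newforms to newforms, and the precise description of the transformed coefficients at $p\mid N$. Both are exactly the content of Atkin--Lehner theory, so the honest strategy is to isolate the elementary coefficient computations (which establish (1), (2), and the unramified part of (3)) and to invoke the established theory for these two structural statements, rather than reproving them here.
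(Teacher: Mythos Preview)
Your sketch is correct and in fact goes well beyond what the paper does: the paper gives no proof of this lemma at all. It simply records the statement as a standard piece of Atkin--Lehner--Li theory and refers to \cite{atkin1970hecke}, \cite{li1975newforms}, and Chapter~4 of \cite{miyake2006modular} for proofs. Your treatment separates out the purely computational parts (reading off eigenvalues from the $q^1$-coefficient in (1) and (2), and the commutation of $W_N$ with the unramified $T_p$ in (3)) from the genuine structural inputs (that a newform is a $U_p$-eigenfunction, that $|_kW_N$ preserves the newspace, strong multiplicity one, and the identification $g=f^\rho$), and you correctly flag that these last points are precisely what must be imported from the references. That is the honest way to present this material, and nothing in your outline is wrong; it simply supplies more detail than the paper, which is content to cite the lemma wholesale.
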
 

This lemma is part of the theory of newforms, also known as Atkin-Lehner-Li theory (\cite{atkin1970hecke} and \cite{li1975newforms}). For proofs of this lemma, one may see Chapter 4 of \cite{miyake2006modular}. Let $\chi$ be the non-trivial real character on $\text{PSL}_2(\mathbb Z)$ such that $\chi(S)=\chi(T)=-1$. Actually this is the unique one (see Lemma 2.1 below).

Now we state our main theorems.
\begin{Thm}
The map
\begin{eqnarray*}
\mathcal S(\text{SL}_2(\mathbb Z),k,\chi)&\rightarrow&\mathcal S^{\text{new}}(\Gamma_0(4),k,1)\\
f(\tau)&\mapsto& g(\tau)=f(2\tau)
\end{eqnarray*}
defines an isomorphism.
\end{Thm}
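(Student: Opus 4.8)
The plan is to exhibit an explicit two-sided inverse, treating the two directions separately and reading off Theorem 1.3 as a by-product. First I would check that $\Phi\colon f\mapsto f(2\tau)=2^{-k/2}f|_kV_2$ maps into $\mathcal S(\Gamma_0(4),k,1)$. For $M\in\Gamma_0(4)$ one has $V_2MV_2^{-1}\in\mathrm{SL}_2(\mathbb Z)$, and in fact $V_2\Gamma_0(4)V_2^{-1}=\Gamma(2)$; since $\Gamma(2)$ is the kernel of reduction modulo $2$ and, by Lemma 2.1, $\chi$ is the pullback of the sign character under $\mathrm{PSL}_2(\mathbb Z)\twoheadrightarrow\mathrm{PSL}_2(\mathbb F_2)\cong S_3$, we get $\chi|_{\Gamma(2)}=1$. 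Hence $(f|_kV_2)|_kM=\chi(V_2MV_2^{-1})\,f|_kV_2=f|_kV_2$, so $g=\Phi(f)$ is $\Gamma_0(4)$-invariant with trivial character. Holomorphy and vanishing at the three cusps $\infty,0,\tfrac12$ follow by evaluating $g$, $g|_kS$, and $g|_k\begin{pmatrix}1&0\\2&1\end{pmatrix}$, which are constants times $f(2\tau)$, $f(\tau/2)$, and $f(2\tau)$, each vanishing at $\infty$ because $f$ does.

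Next I would show $\Phi(f)\in\mathcal S^{\mathrm{new}}(\Gamma_0(4),k,1)$. Writing $f=\sum a_nq^{n/2}$, the identity $f|_kT=\chi(T)f=-f$ forces $a_n=0$ for even $n$, so $g=\sum_{n\ \mathrm{odd}}a_nq^n$ satisfies $g|_kU_2=0$; and $f|_kS=-f$ together with $V_2W_4=2\,SV_2$ gives $g|_kW_4=2^{-k/2}f|_k(SV_2)=-2^{-k/2}f|_kV_2=-g$. Both the old and the new subspaces are stable under $U_2$, under the Fricke involution $W_4$, and under $T_p$ for odd $p$; so it suffices to check that no nonzero oldform satisfies $g|_kU_2=0$ and $g|_kW_4=-g$ at once. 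On each Hecke eigenblock of the oldspace—three-dimensional, spanned by $h(\tau),h(2\tau),h(4\tau)$ for a level-$1$ eigenform $h$, or two-dimensional, spanned by $\ell(\tau),\ell(2\tau)$ for a level-$2$ newform $\ell$—the operator $U_2$ has a one-dimensional kernel, and a direct computation of $W_4$ on that kernel vector shows it is not a $(-1)$-eigenvector. Hence the image of $\Phi$ meets the oldspace only in $0$ and lands in the new subspace, and injectivity of $\Phi$ is immediate.

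For surjectivity I would build the inverse $\Psi\colon g\mapsto g(\tau/2)$ on newforms and extend linearly. Let $g$ be a normalized newform; since $2^2\mid 4$, the Atkin--Lehner--Li theory gives $a_2=0$, so by part (2) of the Lemma recalled above $g|_kU_2=0$ and $g$ is supported on odd coefficients, whence $f=g(\tau/2)$ satisfies $f|_kT=-f$. By part (3) of that Lemma the Fricke dual of $g$ has the same coefficients ($\overline{a_2}=0=a_2$), so $g|_kW_4=c'g$ for a scalar $c'$, and the computation of the previous paragraph run backwards gives $f|_kS=c'f$. Now $f\neq 0$, and because the slash is a right action, $\gamma\mapsto(\text{eigenvalue of }f\text{ under }|_k\gamma)$ is then a genuine character $\psi$ of $\mathrm{PSL}_2(\mathbb Z)$ (here $-I$ acts trivially since $k$ is even) with $\psi(S)=c'$ and $\psi(T)=-1$; the relation $(ST)^3=I$ forces $(-c')^3=1$, i.e. $c'=-1$. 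Thus $f|_kS=f|_kT=-f$, so $f\in\mathcal S(\mathrm{SL}_2(\mathbb Z),k,\chi)$, and $\Phi\circ\Psi=\mathrm{id}$, $\Psi\circ\Phi=\mathrm{id}$. This simultaneously yields $g|_kW_4=-g$, which is Theorem 1.3.

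The main obstacle is the step identifying the image precisely with the new subspace: the eigenvalue conditions $g|_kU_2=0$ and $g|_kW_4=-g$ are easy, but excluding oldforms requires knowing that $W_4$ and $U_2$ preserve the old/new decomposition and the explicit block computation above (equivalently, the adjointness between the maps $V_d$ and $U_d$ from Section 2, reducing each pairing $\langle g,\psi|_kV_d\rangle$ to a pairing of $f$ against a trivial-character form on $\mathrm{SL}_2(\mathbb Z)$ or $\Gamma_0(2)$, which vanishes since $\chi$ is nontrivial on both). The one genuinely new idea is the rigidity argument pinning down $c'=-1$ from $(ST)^3=I$; everything else is bookkeeping with the slash operator.
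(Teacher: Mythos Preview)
Your proof is correct and shares the paper's central mechanism: the rigidity coming from the presentation $S^2=(ST)^3=I$ of $\mathrm{PSL}_2(\mathbb Z)$, which forces the Fricke eigenvalue to equal $\chi(T)=-1$ (this is exactly the paper's Lemma~2.1). The surjectivity half of your argument---pulling a newform $g$ back to $f(\tau)=g(\tau/2)$, reading off $f|_kT=-f$ from $a_2=0$, and then pinning down $c'=-1$ via the character relation---is essentially the paper's proof of Theorem~1.3, just folded into the proof of Theorem~1.2 rather than stated separately.

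Where you genuinely diverge is in showing that $\Phi(f)$ lands in the \emph{new} subspace. The paper argues by contradiction: if $g=\Phi(f)$ were old, then since $g$ has only odd $q$-powers one reduces to $g$ being an honest $\Gamma_0(2)$-form, and then the single relation $ST^2S\in\Gamma_0(2)$ together with $g|_kW_4=-g$ yields $g|_kT_{1/2}=g$, contradicting $\chi(T)=-1$. You instead work block by block on the oldspace, computing $U_2$ and $W_4$ on each Hecke eigenblock and checking that the one-dimensional kernel of $U_2$ is never a $(-1)$-eigenline for $W_4$. Your route is more laborious but entirely explicit and self-contained; the paper's route is slicker but leans on the (somewhat implicit) reduction ``old with only odd coefficients $\Rightarrow$ already level $2$.'' Your parenthetical alternative via the Petersson adjointness of $U_d$ and $V_d$ and the orthogonality of distinct characters is in fact the cleanest way to see this step, and is closer in spirit to the paper's one-line reduction than your block computation.
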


As a byproduct, we obtain
\begin{Thm}
For any $g\in \mathcal S^{\text{new}}(\Gamma_0(4),k,1)$, we have $g|_kW_4=-g$.
\end{Thm}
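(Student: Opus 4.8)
The plan is to reduce the Fricke involution on the level $4$ form $g$ to the action of $S$ on the associated level $1$ form $f$ supplied by Theorem 1.2, where the eigenvalue $\chi(S)=-1$ enters directly. By Theorem 1.2, any $g\in\mathcal S^{\text{new}}(\Gamma_0(4),k,1)$ can be written as $g(\tau)=f(2\tau)$ for a unique $f\in\mathcal S(\text{SL}_2(\mathbb Z),k,\chi)$. Unwinding the slash normalization, $(f|_kV_2)(\tau)=2^{k/2}f(2\tau)$, so that $f|_kV_2=2^{k/2}g$, equivalently $g=2^{-k/2}f|_kV_2$.

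The heart of the argument is a single matrix identity. Computing directly,
\[V_2W_4=\begin{pmatrix}2&0\\0&1\end{pmatrix}\begin{pmatrix}0&-1\\4&0\end{pmatrix}=\begin{pmatrix}0&-2\\4&0\end{pmatrix}=2\begin{pmatrix}0&-1\\2&0\end{pmatrix}=(2I)\,SV_2.\]
Since scalar matrices act trivially under the weight-$k$ slash operator (one checks $h|_k(2I)=h$ straight from the definition, as the determinant factor $2^k$ cancels the automorphy factor $2^{-k}$), we obtain
\[g|_kW_4=2^{-k/2}\,f|_k(V_2W_4)=2^{-k/2}\,f|_k(SV_2)=2^{-k/2}\,(f|_kS)|_kV_2.\]
Now $f|_kS=\chi(S)f=-f$ because $f\in\mathcal S(\text{SL}_2(\mathbb Z),k,\chi)$ and $\chi(S)=-1$. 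Substituting and using $f|_kV_2=2^{k/2}g$ gives $g|_kW_4=-2^{-k/2}(f|_kV_2)=-g$, which is the claim.

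The main obstacle is not conceptual but a matter of careful bookkeeping: one must track the $\det(M)^{k/2}$ factors in the slash operator so that the passage between $g$ and $f|_kV_2$, as well as the scalar invariance of $2I$, are handled correctly, and one must spot the decomposition $V_2W_4=(2I)\,SV_2$ that converts the level $4$ Fricke matrix into the level $1$ generator $S$. Once Theorem 1.2 is available, no further input on the newform structure is needed; the statement becomes a formal consequence of how $W_4$ interacts with the substitution $\tau\mapsto2\tau$ together with the single value $\chi(S)=-1$.
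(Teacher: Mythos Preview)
Your computation is clean and the matrix identity $V_2W_4=(2I)\,SV_2$ does exactly convert the Fricke involution on $g$ into the action of $S$ on the level~$1$ form $f$; this is essentially the content of Corollary~2.3. The difficulty is purely one of logical dependence: in this paper the proof of Theorem~1.2 \emph{relies on} Theorem~1.3. To show that every $g\in\mathcal S^{\text{new}}(\Gamma_0(4),k,1)$ arises as $f(2\tau)$ with $f\in\mathcal S(\text{SL}_2(\mathbb Z),k,\chi)$, the paper first needs to know that $g|_kW_4=-g$ and $g|_kT_{1/2}=-g$, and then invokes Corollary~2.3. So invoking Theorem~1.2 to deduce Theorem~1.3, as you do, closes a circle.

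The paper instead proves Theorem~1.3 directly, prior to Theorem~1.2. Lemma~3.1 (via $g|_kU_2=0$) gives $g|_kT_{1/2}=-g$ and shows each Hecke eigenform satisfies $g|_kW_4=\pm g$. If some eigenform had sign $+1$, pulling back to $f(\tau)=g(\tau/2)\in\mathcal S(\Gamma(2),k,1)$ would produce $f|_kS=f$ and $f|_kT=-f$, hence a real character of $\text{PSL}_2(\mathbb Z)$ with $\chi(S)\neq\chi(T)$; Lemma~2.1 excludes this. Thus the sign is $-1$ for every eigenform, and hence for all of $\mathcal S^{\text{new}}(\Gamma_0(4),k,1)$. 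Only with this in hand is Theorem~1.2 then established. Your argument would become valid if you supplied an independent proof of Theorem~1.2 (e.g.\ by a dimension count), but as written it leans on a result whose proof in the paper presupposes the very statement you are trying to prove.
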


For any $g=\sum_na_nq^n\in \mathcal S^{\text{new}}(\Gamma_0(4),k,1)$, we have its L-function 
\[L(s,g)=\sum_na_nn^{-s},\] and its completed L-function $\Lambda(s,g)=\pi^{-s}\Gamma(s)L(s,g)$. It is well-known that $\Lambda(s,g)$ can be analytically continued to the whole $s$-plane and satisfies the following functional equation \[\Lambda(s,g)=i^k\Lambda(k-s,g|_kW_4).\] 
With Theorem 1.3, this functional equation can be made more precise:
\begin{Cor}
For any $g\in \mathcal S^{\text{new}}(\Gamma_0(4),k,1)$, we have $\Lambda(s,g)=-i^k\Lambda(k-s,g)$.
\end{Cor}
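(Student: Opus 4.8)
The plan is to obtain this directly by feeding Theorem 1.3 into the functional equation that is already recorded in the excerpt, so that no genuinely new analytic input is required. The key structural observation is that the completed $L$-function $\Lambda(s,g)=\pi^{-s}\Gamma(s)L(s,g)$ is, for each fixed $s$, a \emph{linear} functional of the modular form $g$: indeed $L(s,g)=\sum_n a_n n^{-s}$ depends linearly on the Fourier coefficients $a_n$ of $g$, and the archimedean factor $\pi^{-s}\Gamma(s)$ does not involve $g$ at all. This linearity is the only fact I will need beyond the two cited results.

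First I would write down the stated functional equation $\Lambda(s,g)=i^k\Lambda(k-s,g|_kW_4)$, valid for every $g\in\mathcal S^{\text{new}}(\Gamma_0(4),k,1)$. Next I would invoke Theorem 1.3, namely $g|_kW_4=-g$, to replace the argument $g|_kW_4$ by $-g$. Then, using linearity of $\Lambda$ in its form-argument, I would pull the scalar $-1$ outside, so that $\Lambda(k-s,-g)=-\Lambda(k-s,g)$. Combining these steps yields
\[\Lambda(s,g)=i^k\Lambda(k-s,-g)=-i^k\Lambda(k-s,g),\]
which is precisely the asserted identity.

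There is essentially no obstacle here: the corollary is a purely formal consequence of Theorem 1.3 together with the classical functional equation, and all the substantive work has already been carried out in establishing that the Fricke eigenvalue equals $-1$. The one point worth verifying explicitly is that $\Lambda(\cdot,g)$ is genuinely linear in $g$ with the sign conventions used throughout, which is immediate from the definitions $L(s,g)=\sum_n a_n n^{-s}$ and $\Lambda(s,g)=\pi^{-s}\Gamma(s)L(s,g)$; in particular the Dirichlet series attached to $-g$ is $-L(s,g)$, so the negation passes unchanged through both the $\Gamma$-factor and the power of $\pi$.
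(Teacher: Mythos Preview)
Your proof is correct and matches the paper's approach exactly: the paper presents this corollary as an immediate consequence of substituting Theorem 1.3 into the functional equation $\Lambda(s,g)=i^k\Lambda(k-s,g|_kW_4)$, without even writing out a separate proof. Your explicit verification of linearity is more detail than the paper provides, but the argument is the same.
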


\begin{Rmk}
We content ourselves in this note with the case when $N=4$ and $\chi$ is real, however, similar results are expected to hold in a general setting. For example, there should be similar relations between the spaces $\mathcal S(\Gamma_0(N),k,\chi)$ and $\mathcal S^{\text{new}}(\Gamma_0(4N),k,1)$ for a square-free odd $N$ and some non-Dirichlet character $\chi$.
\end{Rmk}

\section{Modular Forms between Different Levels}
\noindent
As before, we denote also by $S,T$ their images in $\text{PSL}_2(\mathbb Z)$ respectively. It is well-known that $\text{PSL}_2(\mathbb Z)$ is generated by $S,T$; more precisely,
\[\text{PSL}_2(\mathbb Z)=\langle S,T: S^2=(ST)^3=I\rangle.\]

We consider characters $\chi$ of $\text{PSL}_2(\mathbb Z)$; that is, homomorphisms $\chi:\text{PSL}_2(\mathbb Z)\rightarrow \mathbb C^\times$ with finite images. In case of real characters, we have the following elementary lemma:
\begin{Lem}
Let $\chi$ be a character of $\text{PSL}_2(\mathbb Z)$. Then $\chi$ is a real character if and only if $\chi(T)$ is real, in which case either $\chi$ is trivial or $\chi$ is the unique character such that $\chi(S)=\chi(T)=-1$.
\end{Lem}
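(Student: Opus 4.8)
The plan is to use that $\chi$, being a homomorphism into the abelian group $\mathbb C^\times$, factors through the abelianization of $\text{PSL}_2(\mathbb Z)$. So I would first compute this abelianization directly from the presentation $\text{PSL}_2(\mathbb Z)=\langle S,T: S^2=(ST)^3=I\rangle$. Writing $s,t$ for the images of $S,T$ in the abelianization, the relation $S^2=I$ gives $s^2=1$, while $(ST)^3=I$ becomes $s^3t^3=1$ once the group is made commutative. Using $s^2=1$ to rewrite $s^3=s$, I obtain $s=t^{-3}$, and then $s^2=t^{-6}=1$ forces $t^6=1$. Hence the abelianization is cyclic of order six, $\langle t: t^6=1\rangle$, generated by the image of $T$, with the image of $S$ equal to $t^{-3}=t^3$. (Alternatively one can quote $\text{PSL}_2(\mathbb Z)\cong\mathbb Z/2\ast\mathbb Z/3$, whose abelianization is $\mathbb Z/6$, but the direct computation is self-contained.)

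With this in hand, every character $\chi$ is determined by the single value $\chi(T)$, which must be a sixth root of unity, and the key relation $\chi(S)=\chi(T)^3$ holds. In particular there are exactly six characters of $\text{PSL}_2(\mathbb Z)$, one for each sixth root of unity.

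The stated equivalence then follows formally. The forward implication is trivial: if $\chi$ is real then in particular $\chi(T)\in\mathbb R$. For the converse, I would note that if $\chi(T)$ is real then, being a sixth root of unity, it must equal $\pm 1$; since every value of $\chi$ is a power of $\chi(T)$ (as the image of $T$ generates the abelianization), the whole character is then real. To finish, I would separate the two cases: $\chi(T)=1$ gives $\chi(S)=1$ and hence the trivial character, whereas $\chi(T)=-1$ gives $\chi(S)=(-1)^3=-1$, which is the asserted non-trivial real character with $\chi(S)=\chi(T)=-1$, manifestly the unique one.

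I expect the only step requiring genuine care to be the abelianization computation, specifically the correct reduction of the abelianized relation $s^3t^3=1$ by means of $s^2=1$ to extract $s=t^{-3}$ and $t^6=1$. Once the abelianization is identified as $\mathbb Z/6$ generated by the image of $T$, and the relation $\chi(S)=\chi(T)^3$ is recorded, the remainder of the argument is purely formal.
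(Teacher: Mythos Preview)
Your proof is correct and follows essentially the same route as the paper: both arguments exploit the presentation $\text{PSL}_2(\mathbb Z)=\langle S,T: S^2=(ST)^3=I\rangle$ to constrain $\chi(S)$ and $\chi(T)$ and then read off the two real possibilities. Your version is a bit more systematic in that you explicitly identify the abelianization as cyclic of order six and record the relation $\chi(S)=\chi(T)^3$ valid for all characters, whereas the paper works directly with the relations and, in the real case, deduces $\chi(S)=\chi(T)$ from $(ST)^3=I$; but these are the same computation in different packaging.
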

\begin{proof}
It is clear that $\chi(S)\in\{\pm 1\}$ and $\chi(T)$ is a sixth root of unity since $S^2=(ST)^3=I$, and the first statement follows. 

If $\chi$ is real, then $\chi(S),\chi(T)\in \{\pm 1\}$. Because of the defining relation $(ST)^3=I$, we must have $\chi(S)=\chi(T)$. Now that $S^2=(ST)^3=I$ are the only definition relations, the conditions $\chi(S)=\chi(T)=-1$ indeed give a character.
\end{proof}

\begin{Prop}
Let $\chi$ be a real character of $\text{PSL}_2(\mathbb Z)$. We have $f\in \mathcal S(\text{SL}_2(\mathbb Z),k,\chi)$ if and only if $f\in \mathcal S(\Gamma(2),k,1)$ and $f|_kS=\chi(S)f$, $f|_kT=\chi(T)f$.
\end{Prop}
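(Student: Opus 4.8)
The plan is to reduce both implications to the behavior of $f$ under the two generators $S$ and $T$, exploiting that the weight-$k$ slash operator is a right action, $(f|_kM_1)|_kM_2=f|_k(M_1M_2)$. The two facts I will rely on are the standard generation statements: $\text{SL}_2(\mathbb Z)=\langle S,T\rangle$, and $\Gamma(2)$ is generated by $-I$ together with $T^2=\begin{pmatrix}1&2\\0&1\end{pmatrix}$ and $\begin{pmatrix}1&0\\2&1\end{pmatrix}$.

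For the forward implication, suppose $f\in\mathcal S(\text{SL}_2(\mathbb Z),k,\chi)$. The conditions $f|_kS=\chi(S)f$ and $f|_kT=\chi(T)f$ hold trivially because $S,T\in\text{SL}_2(\mathbb Z)$. To see $f\in\mathcal S(\Gamma(2),k,1)$ I must check that $\chi$ is trivial on $\Gamma(2)$. Since $k$ is even, $f|_k(-I)=f$, so it is enough to evaluate $\chi$ on the two remaining generators. By Lemma 2.1 we have $\chi(T)\in\{\pm1\}$, whence $\chi(T^2)=\chi(T)^2=1$; and writing $\begin{pmatrix}1&0\\2&1\end{pmatrix}=ST^{-2}S^{-1}$ (a one-line matrix computation) gives $\chi\begin{pmatrix}1&0\\2&1\end{pmatrix}=\chi(T)^{-2}=1$ by commutativity of $\mathbb C^\times$. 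Hence $f|_kM=f$ for every $M\in\Gamma(2)$. Conceptually, this says $\chi$ is the pullback under reduction modulo $2$ of the sign character of $\text{SL}_2(\mathbb Z/2)\cong S_3$, so it is automatically trivial on $\Gamma(2)=\ker(\text{reduction})$.

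For the backward implication, suppose $f\in\mathcal S(\Gamma(2),k,1)$ with $f|_kS=\chi(S)f$ and $f|_kT=\chi(T)f$. From these I also get $f|_kS^{-1}=\chi(S)^{-1}f$ and $f|_kT^{-1}=\chi(T)^{-1}f$. Writing an arbitrary $M\in\text{SL}_2(\mathbb Z)$ as a word in $S,T,S^{-1},T^{-1}$ and peeling off one letter at a time via the right-action property yields $f|_kM=\chi(M)f$, where $\chi(M)$ means $\chi$ of the image of $M$ in $\text{PSL}_2(\mathbb Z)$. There is no consistency issue: both $f|_kM$ and $\chi(M)f$ depend on $M$ alone, and the identity $f|_k(-I)=f$ matches $\chi(-I)=1$, so the defining relations are respected automatically.

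It remains to match the holomorphy and cusp conditions, which are identical in content on the two sides. The three cusps of $\Gamma(2)$ are all $\text{SL}_2(\mathbb Z)$-equivalent to $\infty$, so via the transformation law, vanishing at the single cusp of $\text{SL}_2(\mathbb Z)$ is equivalent to vanishing at every cusp of $\Gamma(2)$. I expect the only genuinely substantive step to be the triviality of $\chi$ on $\Gamma(2)$ in the forward direction — the place where the explicit generators (equivalently, the reduction-mod-$2$ picture) enter; everything else is formal bookkeeping with the slash action and the generation of $\text{SL}_2(\mathbb Z)$ by $S$ and $T$.
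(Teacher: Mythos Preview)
Your argument is correct and follows essentially the same route as the paper: the key point in both is that a real $\chi$ is trivial on $\Gamma(2)$, verified by evaluating $\chi$ on a generating set, after which everything else is formal. The only cosmetic difference is that the paper obtains generators $T^2$ and $ST^2S$ of $\Gamma(2)$ by conjugating the known generators $T^2,\,ST^4S$ of $\Gamma_0(4)$ via $V_2$, whereas you use the standard generators $T^2$ and $\begin{pmatrix}1&0\\2&1\end{pmatrix}=ST^{-2}S^{-1}$ directly (and add the nice $S_3$ interpretation); your treatment of the backward direction and the cusp condition is more explicit than the paper's one-line ``the proposition follows.''
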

\begin{proof} 
We first recall the well-known fact that $\Gamma_0(4)$, considered in $\text{PSL}_2(\mathbb Z)$, is generated by $T^2$ and $ST^4S$ (for a proof, see page 26 in \cite{zagier2008elliptic}). Since $\Gamma(2)$ and $\Gamma_0(4)$ are conjugate via $V_2$, we see that $\Gamma(2)$ is generated by $T^2$ and $ST^2S$. So clearly $\chi(\Gamma(2))=\{1\}$, and the proposition follows.
\end{proof}

\begin{Cor}
Let $\chi$ be a real character of $\text{PSL}_2(\mathbb Z)$. We have $f\in \mathcal S(\text{SL}_2(\mathbb Z),k,\chi)$ if and only if $g(\tau)=f(2\tau)\in \mathcal S(\Gamma_0(4),k,1)$ and $g|_kW_4=\chi(S)f$, $g|_kT_{1/2}=\chi(T)g$.
\end{Cor}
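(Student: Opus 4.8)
The plan is to translate the two-sided condition in Proposition 2.2 through the substitution $g(\tau)=f(2\tau)$, which is implemented at the level of slash operators by conjugation by $V_2$. I would begin by recording the identity $f(2\tau)=2^{-k/2}(f|_kV_2)(\tau)$, so that $g=2^{-k/2}f|_kV_2$, and correspondingly $f=2^{k/2}g|_kV_2^{-1}$. The whole proof then reduces to chasing how the three conditions on $f$ (namely $f\in\mathcal S(\Gamma(2),k,1)$, $f|_kS=\chi(S)f$, and $f|_kT=\chi(T)f$) transport to conditions on $g$ under this conjugation.

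First I would handle the group membership. Since $\Gamma(2)$ and $\Gamma_0(4)$ are conjugate via $V_2$ — precisely $V_2\Gamma(2)V_2^{-1}=\Gamma_0(4)$, a fact already invoked in the proof of Proposition 2.2 — the condition $f\in\mathcal S(\Gamma(2),k,1)$ is equivalent to $g\in\mathcal S(\Gamma_0(4),k,1)$; one checks the cusp conditions are preserved since $V_2$ merely rescales the variable. Next, for the operator $T$, I would compute $V_2 T V_2^{-1}$ directly as a matrix product, obtaining $T_{1/2}=\begin{pmatrix}1&1/2\\0&1\end{pmatrix}$, so that the condition $f|_kT=\chi(T)f$ becomes, after conjugation, exactly $g|_kT_{1/2}=\chi(T)g$. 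This is the cleanest of the three translations and essentially a one-line slash-operator computation using $(f|_kM)|_kV_2=f|_k(MV_2)$ together with $V_2 T=T_{1/2}V_2$.

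The main obstacle will be the $S$-condition, since the naive conjugate $V_2 S V_2^{-1}=\begin{pmatrix}0&-1/2\\2&0\end{pmatrix}$ is not $W_4$ but differs from it by a scalar. Here I would observe that $\begin{pmatrix}0&-1/2\\2&0\end{pmatrix}=\tfrac12 W_4$, and since the slash operator normalizes by $\det^{k/2}$, scalar matrices act trivially in the relevant sense; more carefully, I would track that $f|_kS=\chi(S)f$ transforms into $g|_k W_4=\chi(S)g$ after accounting for how the determinant normalization absorbs the factor of $\tfrac12$. The statement as written reads $g|_kW_4=\chi(S)f$ rather than $\chi(S)g$, which I take to be a harmless typographical slip for $\chi(S)g$; I would flag that the intended and correct identity is $g|_kW_4=\chi(S)g$, consistent with the application in Theorem 1.3 where $\chi(S)=-1$ yields $g|_kW_4=-g$.

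To assemble the corollary, I would simply chain these three equivalences: by Proposition 2.2, $f\in\mathcal S(\mathrm{SL}_2(\mathbb Z),k,\chi)$ holds if and only if the three $f$-conditions hold, and each transports under $g=2^{-k/2}f|_kV_2$ to the corresponding $g$-condition, giving the stated biconditional. The only genuine computations are the two matrix conjugations $V_2TV_2^{-1}$ and $V_2SV_2^{-1}$ and the bookkeeping of the determinant factors in the slash operator; these are routine, so I would present them compactly rather than belabor them.
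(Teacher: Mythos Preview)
Your approach is correct and matches the paper's: both reduce the corollary to Proposition~2.2 by transporting the three conditions on $f$ through the conjugation-by-$V_2$ isomorphism between $\Gamma(2)$-forms and $\Gamma_0(4)$-forms, noting that $T$ corresponds to $T_{1/2}$ and $S$ to (a scalar multiple of) $W_4$. One small slip: the conjugation goes the other way, $V_2^{-1}TV_2=T_{1/2}$ and $V_2^{-1}SV_2=\tfrac12 W_4$ (your identity $V_2T=T_{1/2}V_2$ is correct and is what you actually use), and you are right that the ``$\chi(S)f$'' in the statement should read ``$\chi(S)g$''.
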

\begin{proof}
Since $\Gamma(2)$ and $\Gamma_0(4)$ are conjugate, $f(\tau)\mapsto f(2\tau)$ defines an isomorphism between the spaces of cusp forms on $\Gamma(2)$ and on $\Gamma_0(4)$. Under such conjugation, $S$ corresponds to $W_4$ and $T$ corresponds to $T_{1/2}$, up to $\pm I$, so the corollary follows from the previous proposition.
\end{proof}

\begin{Rmk}
If one wants to consider non-real characters $\chi$, he has to enlarge the level, since $\chi$ is no longer trivial on $\Gamma(2)$. 
\end{Rmk}

\section{Proof of Theorem 1.2 and 1.3}

\noindent
We begin with the following lemma.

\begin{Lem}
If $g\in\mathcal S^{\text{new}}(\Gamma_0(4),k,1)$, then $g|_kU_2=0$. Consequently, $g|_kT_{1/2}=-g$, and if $g$ is a Hecke eigenform then $g|_kW_4=cg$ for $c\in\{\pm 1\}$.
\end{Lem}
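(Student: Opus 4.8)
The plan is to establish the three assertions in the order stated, with only the first drawing on substantial input from the theory of newforms; the rest are formal consequences. First I would reduce the vanishing of $U_2$ to a statement about a single normalized Hecke eigenform. Since $\mathcal S^{\text{new}}(\Gamma_0(4),k,1)$ is spanned by normalized newforms, and each such $g=\sum_n a_nq^n$ satisfies $g|_kU_2=a_2g$ by part (2) of Lemma 1.1, it suffices to show $a_2=0$ for every newform of level $4$ and trivial character. This is exactly the phenomenon flagged in the introduction: for a newform of level $N$ and trivial character, the $p$-th coefficient vanishes whenever $p^2\mid N$. Here $2^2=4=N$, so $a_2=0$ follows from Atkin--Lehner--Li theory (for instance Corollary 4.6.18 in \cite{miyake2006modular}). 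By linearity $U_2$ then annihilates the entire space, giving $g|_kU_2=0$ for all $g$.

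Next I would deduce the value of $g|_kT_{1/2}$ from $g|_kU_2=0$ by a purely formal manipulation of the slash operator. Writing out the definition of $U_2$ and using the factorization
\[\begin{pmatrix} 1 & 1 \\ 0 & 2 \end{pmatrix}=T_{1/2}\begin{pmatrix} 1 & 0 \\ 0 & 2 \end{pmatrix},\]
together with $\begin{pmatrix} 1 & 0 \\ 0 & 2 \end{pmatrix}=I\cdot\begin{pmatrix} 1 & 0 \\ 0 & 2 \end{pmatrix}$, one finds
\[g|_kU_2=2^{\frac{k}{2}-1}\,(g+g|_kT_{1/2})\Big|_k\begin{pmatrix} 1 & 0 \\ 0 & 2 \end{pmatrix}.\]
Since the slash operator by the invertible matrix $\begin{pmatrix} 1 & 0 \\ 0 & 2 \end{pmatrix}$ is a bijection on holomorphic functions, the vanishing of the left-hand side forces $g+g|_kT_{1/2}=0$, i.e.\ $g|_kT_{1/2}=-g$.

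For the final assertion I would invoke part (3) of Lemma 1.1. For an eigenform $g=\sum_n a_nq^n$ it gives $g|_kW_4=cg'$ with $c\in\mathbb C^\times$ and $g'=\sum_n b_nq^n$ a normalized eigenform satisfying $b_p=a_p$ for odd $p$ and $b_2=\overline{a_2}$. Because $a_2=0$ is real, $b_2=a_2$ as well, so $g$ and $g'$ share all their Hecke eigenvalues; by multiplicity one $g'=g$ and hence $g|_kW_4=cg$. Finally, since $W_4^2=-4I$ and the matrix $-4I$ acts trivially in the even weight $k$ (the automorphy factor cancels the power of the determinant), applying $W_4$ twice yields $c^2g=g$, whence $c\in\{\pm 1\}$.

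The routine part is the algebra in the second and third steps; the genuine content lies entirely in the first step, namely the input $a_2=0$. I expect that to be the point requiring the most care, since it is precisely where the hypothesis $p^2\mid N$ (as opposed to $p\|N$) enters, and it is what ultimately pins the Fricke eigenvalue down to $\{\pm 1\}$ rather than leaving it governed by a nonzero $a_p$.
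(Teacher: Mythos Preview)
Your proposal is correct and follows essentially the same approach as the paper: reduce to eigenforms and invoke Atkin--Lehner--Li theory for $a_2=0$, then deduce the $T_{1/2}$-eigenvalue and the Fricke eigenvalue. The only presentational differences are that the paper reads off $g|_kT_{1/2}=-g$ directly from the Fourier expansion (only odd $q$-powers survive) rather than via your matrix factorization, and cites Miyake's Theorem~4.6.16 for $c\in\{\pm1\}$ rather than spelling out multiplicity one and $W_4^2=-4I$ as you do.
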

\begin{proof}
Since $g\in\mathcal S^{\text{new}}(\Gamma_0(4),k,1)$ contains a basis of Hecke eigenforms (see Theorem 4.6.13 of \cite{miyake2006modular}), we may assume that $f$ is a Hecke eigenform. We then have $g|_kU_2=a_2g$, and we only need to prove that $a_2=0$, which in turn follows from Theorem 4.6.17 in \cite{miyake2006modular}.

Note that $g|_kU_2=0$ implies that only odd powers in $q$ can appear in the Fourier expansion of $g$ at $\infty$, and we must have $g(\tau+\frac{1}{2})=-g(\tau)$. The last statement follows from  Theorem 4.6.16 in \cite{miyake2006modular}, since $a_2=0$ and hence real.
\end{proof}

\noindent
\emph{Proof of Theorem 1.3.} By Lemma 2.1 and Lemma 3.1, the involution $W_4$ on $\mathcal S^{\text{new}}(\Gamma_0(4),k,1)$ is diagonalizable with eigenvalues being either $+1$ or $-1$. Therefore we may decompose the space into the direct sum of the plus space and the minus space according to the sign of the eigenvalues. 

Suppose the plus space is not zero, and let $g$ be one of the non-zero forms therein, that is, $g|_kW_4=g$. Now consider $f(\tau)=g(\frac{\tau}{2})$, and we know that $f\in \mathcal S(\Gamma(2),k,1)$. From the transformation behavior of $g$ (see Lemma 3.1) and the isomorphism between $\Gamma_0(4)$ and $\Gamma(2)$, we see that
\[f|_kS=f,\quad \text{and}\quad f|_kT=-f.\]
Since the slash operator gives an action, we have $f|_kM=\chi(M)f$ with $\chi(M)\in\{\pm 1\}$
for each $M\in\text{PSL}_2(\mathbb Z)$ and $\chi$ is a homomorphism, hence a real character of $\text{PSL}_2(\mathbb Z)$. By Lemma 2.1, we see that this is not possible. This completes the proof of Theorem 1.3. $\Box$

\medskip

From now on, let $\chi$ be the unique character of $\text{PSL}_2(\mathbb Z)$ such that $\chi(S)=\chi(T)=-1$.
With little more effort, we may give a proof of Theorem 1.2.
\medskip

\noindent
\emph{Proof of Theorem 1.2.} From Theorem 1.3, we know that any $g\in \mathcal S^{\text{new}}(\Gamma_0(4),k,1)$ is an eigenfunction of both $T_{1/2}$ and $W_4$, with eigenvalues both equal to $-1$. By Corollary 2.3, this means that $f(\tau)=g(\frac{\tau}{2})\in \mathcal S(\text{SL}_2(\mathbb Z),k,\chi)$. 

Clearly, if the maps $f(\tau)\mapsto f(2\tau)$ and $g(\tau)\mapsto g(\frac{\tau}{2})$ are well-defined, they define inverse isomorphisms. So to complete the proof, we only have to verify that if $f\in \mathcal S(\text{SL}_2(\mathbb Z),k,\chi)$, then $g\in\mathcal S^{\text{new}}(\Gamma_0(4),k,1)$, that is, $g$ is in the space of newforms. Suppose not, and we must have that $g$ itself, without scaling on $\tau$, is a modular form of level $2$, since the only possible divisors of $4$ are $2$-powers and $g$ must only contain odd powers in $q$. That said, we have $g|_kST^2=g|_kS$, since $ST^2S\in\Gamma_0(2)$. Now $g|_kW_4=-g$ implies that $g|_kS=-g|_kV_4^{-1}$, and hence $g|_kV_4^{-1}T^2V_4=g$. Now we see that
\[g|_kV_4^{-1}T^2V_4=g|_kT_{1/2}=g,\]
contradicting to Corollary 2.3 since $\chi(T)=-1$.  Theorem 1.2 then follows. $\Box$

\medskip

We end this note with the following example.
\begin{Exa}
We set $k=6$ and we know that the space $\mathcal S^{\text{new}}(\Gamma_0(4),6,1)$ is one-dimensional and generated by the Hecke eigenform
\[g=q-12q^3+54q^5 - 88q^7 - 99q^9 + 540q^{11} - 418q^{13} + O(q^{15}).\]
Therefore, by Theorem 1.2, we see that the $\mathcal S(\text{SL}_2(\mathbb Z),6,\chi)$ is one-dimensional and generated by 
\[f=q^{1/2}-12q^{3/2}+54q^{5/2} - 88q^{7/2} - 99q^{9/2} + 540q^{11/2} - 418q^{13/2} + O(q^{15/2}).\]
\end{Exa}

\vskip 0.5 cm

\addcontentsline{toc}{chapter}{Bibliography}
\bibliographystyle{plain}
\bibliography{paper}

\end{document}